\newtheorem{theorem}{Theorem}[section]
\newtheorem{proposition}[theorem]{Proposition}
\newtheorem{lemma}[theorem]{Lemma}
{\theoremstyle{definition}
\newtheorem{definition}[theorem]{Definition}
\newtheorem{example}[theorem]{Example}
\newtheorem{remark}[theorem]{Remark}
}
\newtheorem*{thma}{Theorem A}
\newtheorem*{thmb}{Theorem B}
\newtheorem*{thmc}{Theorem C}
\newtheorem*{proa}{Proposition A}
\newtheorem*{corA}{Corollary A}
\newtheorem*{corB}{Corollary B}
\newcommand{\N}{\mathbb{N}}
\newcommand{\Z}{\mathbb{Z}}
\newcommand{\R}{\mathbb{R}}
\newcommand{\C}{\mathbb{C}}
\newcommand{\K}{\mathbb{K}}
\newcommand{\V}{\mathcal{V}}
\newcommand{\coff}{\textnormal{coff}}
\newcommand{\tatop}[2]{\genfrac{}{}{0pt}{10}{#1}{#2}}
\begin{document}
\title{Foliations, solvability and global injectivity}

\author[F. Braun, J.R. dos Santos Filho \MakeLowercase{and} M.A. Teixeira]
{Francisco Braun$^*$, Jos\'e Ruidival dos Santos Filho$^\dagger$ \MakeLowercase{and} Marco Antonio Teixeira$^\ddagger$}

\address{$^*,^\dagger$ Departamento de Matem\'atica, Universidade Federal de S\~ao Carlos, 13565-905 S\~ao Carlos, S\~ao Paulo, Brazil}
\email{franciscobraun@dm.ufscar.br}
\email{ruidival@ufscar.br}

\address{$^\ddagger$ Departamento de Matem\'atica, Universidade Estadual de Campinas, 13083-970 Cam\-pi\-nas, S\~ao Paulo, Brazil}
\email{teixeira@ime.unicamp.br}

\subjclass[2010]{Primary 37C10,35F05; Secondary 14R15}
\keywords{Foliations, Global Solvability, Global Injectivity}

\date{\today}

\begin{abstract}
Let $F: \R^n\to\R^n$ be a $C^{\infty}$ map such that $DF(x)$ is invertible for every $x\in\R^n$. 
Although being a local diffeomorphism, $F$ is not necessarily globally injective if $n\geq2$. 
Finding additional assumptions implying the global injectivity of $F$ for $n\geq 2$ is object of intense study in several areas of Mathematics. 
In this paper we revisit some assumptions and relations between them in the bidimensional case and discuss the natural higher dimensional situation. 
\end{abstract}

\maketitle

\section{Introduction}
Let $F: \R^n\to\R^n$ be a $C^{\infty}$ map such that $DF(x)$ is invertible for each $x\in\R^n$. 
From the inverse function theorem, $F$ is a local diffeomorphism but, in general, it is not injective. 
In fact even in the polynomial case in $\R^2$, S. Pinchuk \cite{Pinchuk} proved that additional hypotheses are needed for the injectivity of $F$. 
The same problem concerning polynomial maps in $\mathbb{C}^n$ is yet an open question, widely known as \emph{Jacobian conjecture} (see \cite{BCW} or \cite{Es} for references). 
In the general case, additional conditions to obtain the injectivity of $F$ were established in the literature in varied areas of Mathematics, see \cite{BCW, bruna, braun, Gutierrez1, LX, P, St1, St2}. 
In this paper we discuss relations between some of these additional requirements in different settings. 
Precisely, we relate concepts such as global solvability of vector fields, the non-existence of half-Reeb components of foliations and connectedness of level sets of functions, stressing the differences between the bi-dimensional case and the higher dimensional one. 
In the former case, the discussed assumptions are sufficient for the injectivity, whereas in the later one, this might be not true. 

We begin recalling that in the holomorphic setting, Y. Stein \cite{St1,St2} proposed a relation between global properties of certain vector fields associated to $F$ with the Jacobian conjecture for $n=2$. 
After that, T. Krasi\'nski and S. Spodzieja \cite{KS} improved this result for the $n$-dimensional case. 
(Early connections have been proposed before, see \cite{BCW}). 
Precisely, given a $C^{\infty}$ map $F = (F_1, \ldots, F_n) : \K^n\to \K^n$, with $\K = \R$ or $\C$, we define $n$ vector fields $\V_i$, $i=1,\ldots,n$, as follows:
\begin{equation}\label{logus}
\V_i(\phi)=\det\big( D(F_1, \ldots, F_{i-1}, \phi, F_{i+1}, \ldots, F_n)\big), 
\end{equation} 
where $\phi: \K^n \to \K$ is a $C^\infty$ function. 
A straightforward remark is that $F_j$, $j\in \{1,\ldots, n\}\setminus\{i\}$, are first integrals of $\V_i$, and so each $\V_i$ has $n-1$ first integrals linearly independent. 
Observe also that, for $n = 2$, $\V_i = (-1)^{i}H_{F_j}$, $i\neq j\in\{1,2\}$, where $H_{F_j}$ stands for the Hamiltonian vector field associated to $F_j$, $H_{F_j}=-\partial_2F_j\partial_1+\partial_1F_j\partial_2$ (in this paper, for $f:\K^n\to\K$, we denote $\partial_i f=\partial f/\partial_{x_i}$). 
We patch together the above mentioned results in the following theorem, where $E_n$ stands for the set of complex holomorphic functions on $\C^n$.
\begin{theorem}[\cite{KS, St1, St2}]\label{stein}
Let $F:\C^n\to\C^n$ be a polynomial map such that $\det DF$ is a non-zero constant. 
If  $\V_i(E_n) = E_n$ for $n-1$ different indices $i\in\{1,\ldots, n\}$, then $F$ is injective.
\end{theorem}

In this paper we analyze the real case, so we consider the vector fields given in \eqref{logus} for the \emph{$C^{\infty}$ real} case. 
We first recall the notion of \emph{global solvability} of vector fields. 
Let $M$ be a $C^\infty$ manifold and $C^\infty(M)$ be the set of $C^\infty$ real functions on $M$. 

\begin{definition}
We say that a vector field $\mathcal{X}:C^{\infty}(M)\to C^{\infty}(M)$ is \emph{globally solvable} if $\mathcal{X} \left(C^{\infty}(M)\right)=C^{\infty}(M)$.
\end{definition}

Given $F: \R^2 \to \R^2$ such that $DF(x)$ is invertible for each $x \in \R^2$, it is known that the global solvability of $\V_1$ or $\V_2$ is sufficient for the injectivity of $F$. 
This is a consequence of Proposition \ref{propa} below. 
So in dimension $2$, the natural $C^\infty$ real counterpart of Theorem \ref{stein} is valid. 
But this is not true in higher dimensions. 
Indeed, in \cite{BS} a family of $C^\infty$ non-injective maps $F: \R^3 \to \R^3$, with $DF(x)$ invertible for each $x \in \R^3$ and such that $\V_1$ and $\V_2$ are globally solvable, was constructed. 
A polynomial counterexample is not known. 

On the other hand, another sufficient condition for global injectivity in $\R^2$, now related to foliations, was proposed by Gutierrez in \cite{Gutierrez1}. 
Given a $C^{\infty}$ submersion $f: \R^n \to \R$, we denote by $\mathscr{F}(f)$ the $C^{\infty}$ foliation of dimension $n-1$ whose leaves are the connected components of the level sets of $f$. 
In $\R^2$ the notion of \emph{half-Reeb component}, or hRc, of $\mathscr{F}(f)$, see Definition \ref{vigesima}, was introduced in \cite{Gutierrez1} as an important tool to prove the celebrated Markus-Yamabe conjecture in that paper (see also  \cite{gutierrez} and a variation in \cite{teixeira}). 
Gutierrez result is that a $C^\infty$ map $F: \R^2 \to \R^2$, with $DF(x)$ invertible for all $x \in \R^2$, is injective provided $\mathscr{F}(F_1)$ or $\mathscr{F}(F_2)$ has no hRc, again a consequence of Proposition \ref{propa}. 
In $\R^3$, C. Gutierrez and C. Maquera \cite{maquera} extended the definition of half-Reeb components of $\mathscr{F}(f)$ for submersions $f:\R^3\to\R$, as we will recall in Definition \ref{furgao}. 
They used this in \cite{maquera} to prove a version, in $\R^3$, of a Jacobian conjecture proposed by Z. Jelonek in \cite{J}. 
A result of \cite{maquera} says that $\mathscr{F}(f)$ has no hRc's if and only if the leaves of $\mathscr{F}(f)$ are simply connected. 
Below we will provide explicit examples of injective and non-injective local diffeomorphisms $F = (F_1, F_2, F_3): \R^3 \to \R^3$ such that $\mathscr{F}(F_1)$, $\mathscr{F}(F_2)$ and $\mathscr{F}(F_3)$ have no hRc's. 
That is, as occurring in the sufficient conditions (for global injectivity) related to global solvability, the conditions related to the non-existence of hRc's only work in the $2$-dimensional case. 

We consider yet the following very known condition, now working in any dimension, for global injectivity, using only connectedness of level sets. 
\begin{lemma}\label{connected}
Let $F = (F_1, \ldots, F_n): \R^n\to\R^n$ be a local homeomorphism. 
If for some $i\in\{1,\ldots,n\}$ the intersections 
$$
\bigcap_{\tatop{j=1}{j\neq i}}^n F_j^{-1}(\{c_j\}),\  c_j\in\R 
$$
are connected, then $F$ is injective.
\end{lemma}
\noindent
The proof is simple: the local invertibility of $F$ says that the function $F_i$ must be injective in each of the curves $\bigcap_{\tatop{j=1}{j\neq i}}^n F_j^{-1}(\{c_j\})$, and so the injectivity of $F$ follows from the connectedness assumption. 
See also Lemma \ref{prisma} below. 
Even for $n = 2$ the condition of Lemma \ref{connected} is not necessary for global injectivity, as shows the map $F(x,y) = \big( e^x(1-y^2), e^x(1+y-y^2)\big)$, although $F$ is not surjective (see Corollary A below). 

What happens is that the above three sufficient conditions for global injectivity are equivalent in dimension $2$:  
\begin{proposition}\label{propa}
Let $F = (F_1, F_2):\R^2\to\R^2$ be a $C^{\infty}$ map such that $\det DF$ is nowhere zero. 
Given $i \in \{1, 2\}$, then for $j \neq i$ the following statements are equivalent:
\begin{enumerate}[label={\textnormal{(\alph*)}}]
\item\label{wwww} $F_j^{-1}(\{c\})$ is connected for each $c\in\R$.
\item\label{ww} $\V_i$ is globally solvable.
\item\label{www} $\mathscr{F}{(F_j)}$ has no hRc. 
\end{enumerate}
\end{proposition}
\noindent 
So it readily follows that the two above-mentioned injectivity results in $\R^2$ are direct consequences of Proposition \ref{propa} and Lemma \ref{connected}. 

We remark that Proposition \ref{propa} is not completely new: partial versions of it can be found in \cite{BS, gutierrez, Gutierrez1, teixeira}. 
In this paper, anyway, we will provide a different proof than the existing ones. 
Moreover, recalling that $\V_i = (-1)^{i}H_{F_j}$, the statements 
(a), (b) and (c) 
are all about the component $F_j$ of $F$. 
Actually, we do not need that $F_j$ is a component of a local diffeomorphism, only that it is a submersion. 
We will prove: 
\begin{proa}
Let $f:\R^2\to \R$ be a $C^{\infty}$ submersion. 
The following statements are equivalent: 
\begin{enumerate}[label={\textnormal{(\alph*)}}]
\item\label{Pconnected} $f^{-1}(\{c\})$ is connected for each $c\in\R$. 
\item\label{Psolvable} $H_f$ is globally solvable. 
\item\label{PReeb} $\mathscr{F}(f)$ has no half-Reeb components. 
\end{enumerate} 
\end{proa} 
\noindent 
Clearly, Proposition \ref{propa} follows from Proposition A. 
As we will see, the really hard part of the proof is that (c) $\implies$ (a), the other implications are almost simple remarks. 
Our proof of this fact also provides a qualitative result about hRc's in $\R^2$, that as long as we know is not yet registered: 
\begin{quote} 
Given any arc connecting two connected components of a level set of a submersion $f: \R^2 \to \R$, there exists a hRc of $\mathscr{F}(f)$ with non-compact edges intersecting this arc. 
\end{quote}
See the precise statement in Corollary B below. 
 
As far as we know, there do not exist comparisons between global solvability of vector fields with either connectedness of certain intersections as well as with the (non-)existence of hRc's in foliations of $\R^3$. 
Our next two results, to some extent, fill in these gap. 
\begin{thma}\label{thmaA}
Let $n\geq 3$ and $F = (F_1, \ldots, F_n): \R^n\to\R^n$ be a $C^{\infty}$ map such that $\det DF$ is nowhere zero. 
The following statements are true. 
\begin{enumerate}[label={\textnormal{(\alph*)}}]
\item\label{02} Let $i\in\{1,\ldots,n\}$. 
If there are two different indices $i_1,i_2\in\{1,\ldots, n\}\setminus\{i\}$ such that for each leaf $L_{i_k}\in \mathscr{F}( F_{i_k})$, $k = 1, 2$, and for each $c_j\in\R$ the set 
$$
\left(\bigcap_{j\notin \{i,i_k\}} F_j^{-1}(\{c_j\})\right) \cap L_{i_k}
$$
is connected, then $\V_i$ is globally solvable.

\item\label{emR3} Let $n=3$ and $i\in\{1,2,3\}$. 
If $\V_i$ is globally solvable, then for $j,k\in\{1,2,3\}\setminus\{i\}$, $j\neq k$,  $L_{k}\in\mathscr{F}(F_{k})$ and $c \in \R$, the set
$$
F_j^{-1}(\{c\})\cap L_{k}
$$
is connected.
\end{enumerate}
\end{thma}

\begin{remark}
Observe that the connected sets here are not the connected sets of Lemma \ref{connected}, so we do not necessarily get the injectivity of $F$. 
But if the assumption of the lemma is true, namely, if for each $c_j \in \R$ the set $\cap_{\tatop{j\neq i}{j=1}}^n F_j^{-1}(\{c_j\})$ is connected, then the sets of Theorem A are also connected and we get the global solvability of $\V_i$. 
The reciprocal result is not true even in $\R^3$ according to the already mentioned non-injective maps $F: \R^3\to \R^3$ with $\V_1$ and $\V_2$ globally solvable given in \cite{BS}: the non-injectivity of $F$ says that for any $i$, there always exists $c_j\in \R$ such that $\cap_{\tatop{j\neq i}{j=1}}^n F_j^{-1}(\{c_j\})$ is not connected. 
Another much simpler example is $F(x)=\left(e^{x_1}\cos x_2,e^{x_1}\sin x_2,x_3,\ldots,x_n\right)$. 
Here we have $\det DF(x)=e^{2x_1}>0$ for all $x\in\R^n$. 
Moreover, for each $i = 3,\ldots,n$, it is clear that $\V_i = e^{2x_1}\partial_i$ is a globally solvable vector field. 
On the other hand, taking $c_1 = 1$, $c_2 = 0$ and $c_3,\ldots,c_n\in\R$, it follows that for each $i = 3, \ldots, n$, the set 
$$
\bigcap_{j\neq i}F_j^{-1}(\{c_j\}) = \bigcup_{k\in \Z}\left\{(0,2k\pi,c_3,\ldots,c_{i-1},z,c_{i+1},\ldots,c_n)\ |\ z\in\R\right\} 
$$
is disconnected. 
The vector fields $\V_1$ and $\V_2$ are not globally solvable, as we will see in Example \ref{appl}. 
\end{remark}

In order to prove statement (b) of Theorem A we will use the next result, a comparison between the concept of hRc, according to Definition \ref{furgao}, with the solvability one.  

\begin{thmb}\label{thmB}
Let $F = (F_1, F_2, F_3):\R^3\to\R^3$ be a $C^{\infty}$ map such  that $\det DF$ is nowhere zero. 
If $\mathscr{F}(F_i)$ has a hRc, then $\V_j$ is \emph{not} globally solvable for $j\in\{1,2,3\}\backslash\{i\}$. 
\end{thmb}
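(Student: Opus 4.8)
The plan is to deduce the non-solvability of the vector fields from the contrapositive of statement \eqref{emR3} of Theorem~A, feeding it a disconnected intersection manufactured from the half-Reeb component. Write $\{i,a,b\}=\{1,2,3\}$. To prove that $\V_a$ is not globally solvable, statement \eqref{emR3} of Theorem~A, with the role of its distinguished index played by $a$, tells me it suffices to exhibit distinct $p,q\in\{1,2,3\}\setminus\{a\}=\{i,b\}$, a leaf $L_q\in\mathscr{F}(f_q)$ and a value $c_p\in\R$ for which $f_p^{-1}(\{c_p\})\cap L_q$ is disconnected. I will always take $q=i$, so the whole theorem reduces to the following assertion: \emph{for each $m\in\{a,b\}$ there are a leaf $L_i\in\mathscr{F}(f_i)$ and a value $c_m\in\R$ such that $f_m^{-1}(\{c_m\})\cap L_i$ is disconnected.} Indeed, applying this with $m=b$ disproves solvability of $\V_a$, and with $m=a$ disproves that of $\V_b$, which is exactly the conclusion of Theorem~B.

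Next I would record the elementary consequence of the hypothesis. Since $\det DF$ is nowhere zero, the gradients $\nabla f_1,\nabla f_2,\nabla f_3$ are everywhere linearly independent; in particular each $f_m$ is a submersion, so its level sets are regular, and on every leaf $L_i$ — a connected component of a level set of $f_i$, hence a properly embedded, non-compact surface — the restriction $f_m|_{L_i}$ is a submersion for each $m\neq i$, while $(f_a,f_b)|_{L_i}\colon L_i\to\R^2$ is even a local diffeomorphism (if it failed to be one at a point, some nontrivial combination of $\nabla f_a,\nabla f_b$ would be proportional to $\nabla f_i$, contradicting independence). Consequently $f_m^{-1}(\{c_m\})\cap L_i$ is exactly a level set of the submersion $f_m|_{L_i}$, and on a leaf diffeomorphic to $\R^2$ the two-dimensional machinery of Proposition~A becomes available through the local diffeomorphism $(f_a,f_b)|_{L_i}$: the set $f_m^{-1}(\{c_m\})\cap L_i$ is disconnected precisely when $\mathscr{F}(f_m|_{L_i})$ has a half-Reeb component.

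The heart of the argument is then to produce, out of the three-dimensional half-Reeb component of $\mathscr{F}(f_i)$ furnished by Definition~\ref{furgao}, a leaf $L_i\cong\R^2$ and a value $c_m$ realizing such a planar half-Reeb component of $\mathscr{F}(f_m|_{L_i})$. I would unpack Definition~\ref{furgao} and exploit the accumulation/escape behaviour of the leaves of $\mathscr{F}(f_i)$ that build the component: a leaf $L_i$ crossing the component transversally should carry two arcs of a single level $\{f_m=c_m\}$ — one coming from the ``interior'' of the component and one forced out along its non-compact boundary — that cannot be joined within $L_i$, i.e.\ a genuine planar half-Reeb component of $\mathscr{F}(f_m|_{L_i})$ in the sense of Definition~\ref{vigesima}. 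Proposition~A applied on $L_i$ then converts this into the disconnectedness of $f_m^{-1}(\{c_m\})\cap L_i$, and the reduction of the first paragraph finishes the proof.

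The main obstacle is precisely this last step: matching the three-dimensional model of Definition~\ref{furgao} to the planar model of Definition~\ref{vigesima}. Two points need care. First, one must locate within the component a leaf actually diffeomorphic to $\R^2$, so that Proposition~A applies verbatim rather than only its local analogue. Second, the half-Reeb degeneracy must be produced in both transverse directions $m=a$ and $m=b$, whereas the defining picture might at first sight seem to single out one of them; I expect the symmetric role of $f_a$ and $f_b$ in the local diffeomorphism $(f_a,f_b)|_{L_i}$ to dissolve this asymmetry, but checking it against Definition~\ref{furgao} is where the genuine work lies.
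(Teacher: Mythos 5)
Your proposal has two genuine problems, and either one alone is disqualifying.

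First, the reduction is circular relative to the paper's logical structure. Statement \eqref{emR3} of Theorem A is not an independent tool: the paper defers its proof to the end of Section \ref{section2}, \emph{after} Theorem B, and that proof begins by invoking Theorem B itself (global solvability of $\V_1$ plus Theorem B yields that $\mathscr{F}(f_2)$ has no hRc, whence by Proposition 2.2 of \cite{maquera} all its leaves are planes, and only then does Proposition A enter). So deducing Theorem B from the contrapositive of statement \eqref{emR3} assumes exactly what is to be proved, unless you supply an independent proof of \eqref{emR3} --- which you do not.

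Second, even granting \eqref{emR3}, the mathematical core of your argument is absent, and you say so yourself: the passage from the three-dimensional hRc of Definition \ref{furgao} to a leaf $L_i\in\mathscr{F}(f_i)$ and a value $c_m$ with $f_m^{-1}(\{c_m\})\cap L_i$ disconnected is, in your words, where the genuine work lies, and no construction is given. (Your insistence on first finding a leaf diffeomorphic to $\R^2$ also pulls in the wrong direction: the natural candidate is the supporting leaf $L_0$ of the vanishing cycle, which by item \eqref{1f} of Definition \ref{furgao0} is \emph{not} simply connected; any disconnectedness should come precisely from that nontrivial topology, and statement \eqref{emR3} nowhere requires the leaf to be a plane, so the detour through a planar hRc and Proposition A on the leaf is unnecessary even within your own scheme.) Contrast this with the paper's proof, which needs neither Theorem A nor any disconnected intersection: by Lemma \ref{unbounded} one picks $x_n\in\textnormal{int}(\mathcal{A})$ with $|x_n|>n$, so $x_n\in D_{t_n}$ for some $t_n$; by \eqref{fusca} of Lemma \ref{prisma} the integral curve of $\V_j$ through $x_n$ ($j\neq i$) stays in the leaf $L_{t_n}\supset D_{t_n}$, and by \eqref{passat1} of the same lemma it cannot remain in the compact disc $\overline{D_{t_n}}$, hence exits in both time directions through $\partial D_{t_n}$, which lies in the fixed compact set $K$ given by the embedding associated to the vanishing cycle. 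One thus gets compact arcs of integral curves with endpoints in $K$ passing through points arbitrarily far away, violating condition \eqref{Xconvexidade2} of Lemma \ref{vectra} directly. Your plan, as written, cannot be repaired short of proving \eqref{emR3} independently \emph{and} carrying out the missing hRc-to-disconnectedness construction, at which point it would be an entirely different (and longer) argument.
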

\noindent
The converse of Theorem B fails in general. 
Indeed, take for instance the already considered map $F(x) = \left(e^{x_1}\cos x_2,e^{x_1}\sin x_2,x_3\right)$ for $n=3$. 
Since each leaf of the foliation $\mathscr{F}(F_i)$ is homeomorphic to $\R^2$, it follows that $\mathscr{F}(F_i)$ cannot have half-Reeb components, for $i = 1, 2, 3$. 
But as we have already mentioned, neither $\V_1$ nor $\V_2$ are globally solvable (Example \ref{appl}). 
\begin{remark}
Since the map $F$ in the last example is not injective, we see that the non-existence of hRc's on the foliations $\mathscr{F}(F_i)$ does not guarantee global injectivity.
\end{remark}

\begin{remark}\label{cof}
In general, the existence of hRc does not interfere on the injectivity of $F$, for instance, let $F_1: \R^3\to\R$ be given by $F_1(x) = x_1^2 + x_2^2 - e^{x_3}$. 
Clearly $F_1$ is a submersion and it is simple to prove that the set $\{x\in\R^3\ |\ x_1^2+x_2^2\leq e^{x_3},\ x_3\leq 0\}$ is a half-Reeb component of $\mathscr{F}(F_1)$ with compact face $\{(x_1,x_2,0)\ |\ x_1^2+x_2^2\leq 1\}$ and $F_0(S^1)=\{(x_1,x_2,0)\ |\ x_1^2+x_2^2=1\}$ (contained in the leaf $x_1^2 + x_2^2 = e^{x_3}$), see Definition \ref{furgao}. 
Now, let $G = (F_2,F_3): \R^2\to\R^2$ be a $C^{\infty}$ map such that $\det DG$ is nowhere zero and define $F(x) = \big(F_1(x), G(x_1,x_2)\big)$. 
It follows that $\det DF = -e^{x^3} \det DG$, and we can have two situations:
\begin{enumerate}[label={\textnormal{(\roman*)}}] 
\item\label{hhhhh} F is not injective if $G$ is not injective. 
\item\label{hhhh} $F$ is injective if $G$ is injective.
\end{enumerate}
Take $G=\left(e^{x_1} \cos x_2, e^{x_1} \sin x_2\right)$, for item (i) and $G = (x_1, x_2)$ for item (ii), for instance. 

A final comment here is that by taking this last $F = (F_1, x_1, x_2)$ and the linear transformation $L = (x_1, x_1+x_2, x_1+x_3)$, and then considering the global injective map $H = L\circ F = (F_1, x_1 + F_1, x_2 + F_1)$, it follows that $H_2^{-1}(0)$ and $H_3^{-1}(0)$ are not simply connected and so the three foliations $\mathscr{F}(H_i)$, $i=1,2,3$, will have hRc's. 
\end{remark}

On the other hand, for any $i$, global solvability of $\V_i$, non-existence of hRc's in $\mathscr{F}(F_i)$ or connectedness of the intersections in Lemma \ref{connected} are necessary conditions for global \emph{bijectivity} of local diffeomorphisms. 
Precisely: 
\begin{corA}
Let $F: \R^n \to \R^n$ be a global diffeomorphism, then for every $i=1,\ldots, n$ the vector field $\V_i$ is globally solvable, the intersections $\bigcap_{\tatop{j=1}{j\neq i}}^n F_j^{-1}(\{c_j\}),\  c_j\in\R$, are connected and, if $n\leq 3$, then  $\mathscr{F}(F_i)$ has no hRc's. 
\end{corA}
This follows because $\bigcap_{\tatop{j=1}{j\neq i}}^n F_j^{-1}(\{c_j\}) = F^{-1}(\{c_1\} \times \ldots \{c_{i-1}\} \times \R \times \{c_{i+1}\} \times \cdots \times \{c_n\})$ is connected for any $i$. 
Then case $\R^2$ follows from Proposition A. 
The higher dimension case follows from Theorem A (see the remark right after this theorem), and from Theorem B. 

Therefore, the existence of a non global solvable vector field $\V_i$ or the existence of a hRc in $\mathscr{F}(F_i)$ are obstructions for $F$ to be a global diffeomorphism. 
In particular, in the polynomial case, where injectivity implies surjectivity, according to \cite{Birula}, these are obstructions for global injectivity. 

We finish the paper with a discussion about local invertibility. 
In Remark \ref{cof}, a local invertible map $F = (F_1, F_2, F_3): \R^3\to \R^3$ was constructed by ``completing'' a given submersion $F_1$ with coordinate functions $F_2$ and $F_3$. 
This is not always possible. 
In dimension $2$, the following result from \cite{bruna} provides an obstruction for a given submersion $f: \R^2 \to \R$ to be completed to a local invertible map $(f,g): \R^2 \to \R^2$. 
It turns out that the obstruction is related to the hRc concept. 
Recall that $H_f(g) = \det D(f,g)$. 
\begin{theorem}[\cite{bruna}]\label{eymael}
Let $f:\R^2\to\R$ be a $C^{\infty}$ submersion, $\mathcal{A}\subset\R^2$ be a hRc of $\mathscr{F}(f)$ and $U$ be a neighborhood of $\mathcal{A}$. 
If $h: U\to[0,\infty)$ is a $C^{\infty}$ function such that 
$$
\int_{\mathcal{A}}h=\infty,
$$
then there exists no $C^{\infty}$ function $g:U\to\R$ such that $H_fg = h$ in $U$. 
\end{theorem}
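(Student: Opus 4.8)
The plan is to turn the equation $H_fg=h$ into a flux identity and to exploit that $H_f$ is area preserving. First I would record two structural facts about $H_f=-\partial_2f\,\partial_1+\partial_1f\,\partial_2$. It is divergence free, since $\operatorname{div}H_f=-\partial_1\partial_2f+\partial_2\partial_1f=0$, so that $H_fg=\operatorname{div}(gH_f)$; and its trajectories lie in the level sets of $f$, hence are exactly the leaves of $\mathscr{F}(f)$ (as $f$ is a submersion, $H_f$ is nowhere zero). Consequently the flow $\phi_t$ of $H_f$ preserves the area form $dx\wedge dy$, and along every trajectory $t\mapsto\phi_t(p)$ one has $\frac{d}{dt}g(\phi_t(p))=(H_fg)(\phi_t(p))=h(\phi_t(p))\ge 0$. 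Thus $g$ is nondecreasing along the flow, and its increment between two points of a trajectory equals the integral of $h$ over the connecting arc.

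Next I would set up coordinates adapted to the half-Reeb component. By Definition \ref{vigesima}, $\mathcal{A}$ carries a compact transversal edge $\tau$ meeting the leaves of $\mathscr{F}(f)|_{\mathcal{A}}$, while the rest of $\partial\mathcal{A}$ consists of leaves. Parametrizing $\tau$ by $s$ in a compact interval and flowing, $\Phi(s,t)=\phi_t(\tau(s))$ sweeps out $\mathcal{A}$; area preservation gives $|\det D\Phi(s,t)|=\rho(s)$ with $\rho$ continuous and positive on the compact parameter interval, hence $0<m\le\rho\le M$. Writing $q^+(s)$ for the far endpoint (or limit point) of the leaf through $\tau(s)$, the change of variables together with the previous step yields
\[
\int_{\mathcal{A}}h\,dx\,dy=\int_s\rho(s)\Big(\int_t h(\Phi(s,t))\,dt\Big)\,ds=\int_s\rho(s)\big[g(q^+(s))-g(\tau(s))\big]\,ds .
\]
Equivalently, exhausting $\mathcal{A}$ by compact pieces and applying the divergence theorem to $\operatorname{div}(gH_f)$, the leaf portions of the boundary carry no flux because $H_f$ is tangent to them, so the only contributions come from $\tau$ and from the cross-sections cutting off the unbounded part.

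The term $\int_s\rho(s)g(\tau(s))\,ds$ is finite, since $\tau$ is compact and $g$ is continuous, so the whole point is to bound the contribution of the $g$-values at the noncompact ends of the leaves. This is the main obstacle, and it is exactly where the geometry of the half-Reeb component enters: I would use Definition \ref{vigesima} to show that each leaf through $\tau(s)$ has its far end $q^+(s)$ again controlled by the compact edge, in the sense that $q^+(s)$ lies on, or accumulates onto, $\tau$ (respectively onto a boundary leaf abutting $\tau$), so that by continuity of $g$ the bracket $g(q^+(s))-g(\tau(s))$ is bounded uniformly in $s$, say by $2\sup_{\tau}|g|$. Combined with $\rho\le M$ and the compactness of the parameter interval, this forces $\int_{\mathcal{A}}h<\infty$, contradicting the hypothesis $\int_{\mathcal{A}}h=\infty$. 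The delicate point to be verified carefully, using the half-Reeb structure together with the monotonicity of $g$ along the flow, is precisely this absence of flux escaping to infinity, i.e.\ that the cross-sections cutting off the unbounded part of $\mathcal{A}$ contribute a uniformly bounded amount in the limit.
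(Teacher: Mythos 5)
Your overall strategy --- rewriting $H_fg=h$ as a flux statement for $gH_f$ (equivalently, reading off increments of $g$ along the flow), observing that leaf portions of boundaries carry no flux because $H_f$ is tangent to them, and bounding everything by the values of $g$ on the compact edge --- is exactly the mechanism of the paper's argument: the paper obtains this statement as the two-dimensional case of Theorem C, by applying the divergence theorem to $gH_f$ (using $\textnormal{div}(gH_f)=\det D(f,g)=H_fg$, which is Lemma \ref{ghjkk} for $n=2$) over an exhaustion $P_k$ whose boundaries consist of pieces of leaves $Q_k$ (zero flux) and pieces $L_k$ of one fixed bounded hypersurface $L$ (uniformly bounded flux), followed by monotone convergence; a hRc in $\R^2$ is precisely such a mild component, with $L$ the compact edge (Definition \ref{marina} and the Example after it).

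However, you leave the decisive point unproven, and you even hedge on whether it is true: you write that the far end $q^+(s)$ ``lies on, or accumulates onto, $\tau$'' and call the uniform boundedness of $g(q^+(s))-g(\tau(s))$ ``the delicate point to be verified''. This is the one thing that must be nailed down, and it is immediate from Definition \ref{vigesima}: the interior leaves of $\mathscr{F}(f)|_{\mathcal{A}}$ are the $T$-images of the arcs $\{xy=c\}\cap B$ with $0<c<1$, each of which has \emph{both} endpoints on the segment $\{x+y=2\}$; equivalently, in the description following Definition \ref{vigesima}, $q^+(s)=y(\tau(s))$ is the image of $\tau(s)\in S_1$ under the return homeomorphism onto $S_2$. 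So $q^+(s)$ lies exactly on the compact edge, $\left|g(q^+(s))-g(\tau(s))\right|\le 2\sup_{\tau}|g|<\infty$, and your change-of-variables inequality closes the proof. In particular, your worry about ``flux escaping to infinity through cross-sections cutting off the unbounded part'' is a non-issue: the natural exhaustion (the union of the leaf arcs through the points of $S_1$ at distance at least $1/k$ from $p$) is bounded by leaf arcs and a portion of the compact edge only, so no cross-sections near infinity ever appear; the only unbounded leaves are the two non-compact edges, which form a null set. Two small corrections: the lower bound $0<m\le\rho$ you assert fails near the tangency point $w=T(1,1)$ of the compact edge (only the upper bound $\rho\le M$ is needed), and if you flow from the whole edge $\tau$ rather than from the half-edge $S_1$ you count each leaf twice.
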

In \cite{bruna,braun} for instance, Theorem \ref{eymael} was used to show that a polynomial submersion $p: \R^2 \to \R$ of degree less than or equal to $4$ cannot be a component of a \emph{non-injective} polynomial local diffeomorphism $(p, q): \R^2 \to \R^2$. 
\noindent
In this paper we extend Theorem \ref{eymael} to $\R^n$ in Theorem C. 
In order to state it, we need to define a new concept that we call a \emph{mild half-Reeb component}, or simply \emph{mhRc}, according to Definition \ref{marina}, of a foliation $\mathscr{F}(f)$, where $f:\R^n\to\R$ is a $C^{\infty}$ submersion. 
As we will see, in $\R^2$ and in $\R^3$, half-Reeb components are mild half-Reeb components. 
\begin{thmc}\label{thmC}
Let $f:\R^n\to\R$ be a $C^{\infty}$  submersion, $\mathcal{B}\subset\R^n$ be a mhRc of $\mathscr{F}(f)$ and $U$ be a neighborhood of $\mathcal{B}$. 
If $h: U\to [0,\infty)$ is a $C^{\infty}$ function such that 
$$
\int_{\mathcal{B}}h=\infty,
$$
then there exists no $C^{\infty}$ map $F=(F_1, \cdots, F_n):U\to\R^n$ with $F_1 = f$ and $\det DF = h$ in $U$.
\end{thmc}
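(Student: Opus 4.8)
The plan is to argue by contradiction and to reduce the statement to a single scalar estimate obtained by integrating an exact form over the leaves of $\mathscr{F}(f)$, exactly as in the two-dimensional case of Theorem \ref{eymael}. Suppose such a map $F=(f_1,\dots,f_n)$ exists, with $f_1=f$ and $\det DF=h$ on $U$. The starting observation is the pointwise identity of $n$-forms
\begin{equation*}
h\,dx_1\wedge\cdots\wedge dx_n=df_1\wedge\cdots\wedge df_n=df\wedge\omega,\qquad \omega:=df_2\wedge\cdots\wedge df_n,
\end{equation*}
which holds because $\det DF=\det(\partial_j f_i)$. Thus the hypothesis $\int_{\mathcal B}h=\infty$ says that the integral of $df\wedge\omega$ over $\mathcal{B}$ diverges, and the whole point is to show that the structure of a mild half-Reeb component forces this integral to be finite.

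First I would slice $\mathcal{B}$ by the level sets of $f$. Writing $\ell_c=f^{-1}(\{c\})\cap\mathcal{B}$ for the leaf of $\mathscr{F}(f)$ sitting over $c$, the coarea (Fubini for forms) identity for the submersion $f$ gives
\begin{equation*}
\int_{\mathcal{B}}h=\int_{\mathcal{B}}df\wedge\omega=\int_{I}\Big(\int_{\ell_c}\omega\big|_{\ell_c}\Big)\,dc,\qquad I:=f(\mathcal{B}),
\end{equation*}
where each $\ell_c$ is an oriented $(n-1)$-manifold with boundary. The essential structural input from Definition \ref{marina} that I would use is that the boundaries $\partial\ell_c$ all lie on a common compact ``edge'' $E\subset U$ transverse to $\mathscr{F}(f)$, with $\partial\ell_c=f^{-1}(\{c\})\cap E$; this is the higher-dimensional analogue of the two endpoints of a leaf-arc lying on the compact transversal of a planar half-Reeb component.

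The key computation is then Stokes' theorem on each leaf. Since $\omega=d\eta$ with $\eta:=f_2\,df_3\wedge\cdots\wedge df_n$, and restriction commutes with $d$, we have $\omega|_{\ell_c}=d\bigl(\eta|_{\ell_c}\bigr)$, so $\int_{\ell_c}\omega|_{\ell_c}=\int_{\partial\ell_c}\eta$. Because $E$ is compact and $f_2,\dots,f_n\in C^{\infty}(U)$, the form $\eta$ is bounded on $E$, say $|\eta|\le C$ there; combining this with the metric coarea formula applied to $f|_E$ on the compact manifold $E$,
\begin{equation*}
\int_{\mathcal{B}}h\le C\int_I\mathrm{Vol}_{n-2}\bigl(f^{-1}(\{c\})\cap E\bigr)\,dc=C\int_E|\nabla^E f|\,dV_E<\infty,
\end{equation*}
contradicting $\int_{\mathcal{B}}h=\infty$. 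For $n=2$ this is literally the argument behind Theorem \ref{eymael}: $\eta=f_2$ is a function, $\partial\ell_c$ consists of the two endpoints of the leaf on the compact edge, and $\int_{\partial\ell_c}\eta$ is the bounded difference of the values of $f_2$ at those endpoints, reflecting the monotonicity $H_f f_2=h\ge 0$ along leaves.

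The delicate point, and the reason the notion must be \emph{mild}, is the justification of Stokes' theorem on $\ell_c$: I must know that for each $c\in I$ the leaf $\ell_c$ is a manifold with boundary whose boundary is entirely contained in the compact edge $E$, with no escaping ends and no separatrix boundary contributing spurious terms (in the planar model the leaves only accumulate on the separatrices as $c$ approaches the corner value, but never meet them for $c$ in the interior of $I$). I expect the main work to lie in extracting from Definition \ref{marina} precisely this properness of the leaves over $I$ and the uniform control of $\partial\ell_c$ on $E$, together with the orientation bookkeeping in the coarea step for forms and the harmless behaviour where $h=0$ (there $\omega$ may degenerate, but the identity $h\,dx=df\wedge\omega$ remains valid pointwise). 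Verifying that half-Reeb components in $\R^2$ and in $\R^3$ are mild then makes Theorem C contain Theorem \ref{eymael} and its three-dimensional counterpart as special cases.
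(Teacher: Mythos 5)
Your overall strategy --- argue by contradiction and exploit the fact that $h=\det DF$ is exact, so that its integral over the component is controlled by boundary terms living on a fixed bounded set --- is the same as the paper's, and your form identity $h\,dx_1\wedge\cdots\wedge dx_n=df\wedge d\bigl(f_2\,df_3\wedge\cdots\wedge df_n\bigr)$ is the differential-form counterpart of the paper's Lemma \ref{ghjkk}, which states $\textnormal{div}(f_n\V_n)=\det DF$. But the place where you apply Stokes' theorem is different, and that is exactly where your argument has a genuine gap. You slice $\mathcal{B}$ by the leaves $\ell_c=f^{-1}(\{c\})\cap\mathcal{B}$ and apply Stokes on each leaf, which forces you to assume that each $\ell_c$ is a compact manifold with boundary and that all the boundaries $\partial\ell_c$ lie on one common compact edge $E$ transverse to $\mathscr{F}(f)$. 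That structure is not part of Definition \ref{marina} and cannot be extracted from it: a mild half-Reeb component is only required to be the closure of an increasing union of bounded open solids $P_k$ whose boundary splits as $Q_k\cup L_k$, where $Q_k$ lies in a single (possibly $k$-dependent) leaf and the hypersurface pieces satisfy $L_k\subset L_{k+1}\subset L$ for one fixed \emph{bounded} hypersurface $L$. Nothing whatsoever is assumed about the other leaves crossing $P_k$ --- they need not be compact, need not have boundary on $L$ --- and $L$ is not assumed transverse to the foliation, nor even compact. The whole point of weakening hRc to mhRc is to drop precisely the leafwise product structure your slicing requires; you flag supplying it as ``the main work'', but it is not a technical verification that can be carried out from Definition \ref{marina}. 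What your argument does prove is Theorem \ref{eymael} and its analogue for hRc's in $\R^3$ (Definition \ref{furgao}), where a compact edge or compact face exists by definition --- i.e., the special cases, not Theorem C.

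The paper's proof applies the divergence theorem to the solids rather than to the leaves: by Lemma \ref{ghjkk}, $\int_{P_k}h=\int_{Q_k}\langle f_n\V_n,N\rangle\,w_{Q_k}+\int_{L_k}\langle f_n\V_n,N\rangle\,w_{L_k}$; the flux through $Q_k$ vanishes because $\V_n(f_1)=0$ makes $\V_n$ tangent to the leaf containing $Q_k$, and the flux through $L_k\subset L$ is bounded uniformly in $k$ by $M=\int_L|\langle f_n\V_n,N\rangle|\,w_L$. Monotone convergence on $P_k\nearrow$ then contradicts $\int_{\mathcal{B}}h=\infty$. Note that this uses exactly, and only, the data that Definition \ref{marina} provides: one leaf portion per solid (for the tangency/vanishing-flux step) and a uniformly bounded non-leaf portion (for the estimate). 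If you want to salvage a leafwise coarea-plus-Stokes argument, you would first have to reconstruct solids $P_k$ from your leaves and edge --- at which point you have reproduced the divergence-theorem proof.
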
 

The paper is organized as follows: We prove statement (a) of Theorem A in Section \ref{firstsection}. 
Then we revisit hRc's in $\R^2$ proving Proposition A in Section \ref{revisited}. 
Theorem B and statement (b) of Theorem A are proved in Section \ref{dimensao3}. 
Finally, in Section \ref{finalresult}, we prove Theorem C. 

\section{Global solvability and connected components}\label{firstsection}
We begin this section by gathering together some properties of integral curves of the vector fields $\V_i$ defined in \eqref{logus}. 
Then we recall a characterization of global solvability of a vector field by means of the geometry of its integral curves. 
These will provide the ingredients to the proof of the first part of Theorem A. 
 
\begin{lemma}\label{prisma}
Let $F = (F_1, \ldots, F_n): \R^n\to\R^n$ be a $C^{\infty}$ map such that $\det DF$ is nowhere zero. 
Then for each $i\in\{1,\ldots,n\}$, 
\begin{enumerate}[label={\textnormal{(\alph*)}}] 
\item The integral curves of $\V_i$ are the non empty connected components of 
\begin{equation}\label{logus2}
\bigcap_{\tatop{j=1}{j\neq i}}^n F_j^{-1}(\{c_j\}),\  c_j\in\R.
\end{equation}
\item The function $F_i$ is strictly monotone along the integral curves of $\V_i$.
\item The $\alpha$- and $\omega$-limit sets in $\R^n$ of each integral curve of $\V_i$ are empty.
\end{enumerate}
In particular, for each $i_1<i_2<\cdots<i_k\in\{1,\ldots,n\}$, $k<n$, the intersections $\cap_{j=1}^k F_{i_j}^{-1}(\{c_j\})$, $c_j\in\R$, are empty or unbounded. 
\end{lemma}
\begin{proof}
Applying the definition of $\V_i$, we get $\V_i(F_j)=\delta_{ij}\det DF$, for each $j\in \{1,\ldots, n\}$, where $\delta_{ij}$ stands for the Kronecker delta. 
Thus for a given integral curve $\gamma$ of $\V_{i}$, it follows that $(F_j\circ\gamma)'(t)=\delta_{ij}\det DF(\gamma(t))$ for all $t$ in the maximal interval of solution of $\gamma$. 
This shows that $F_i$ is strictly  monotone along $\gamma$, proving  (b), and that $\gamma$ is contained in a connected component of an intersection of \eqref{logus2}. 
Since these intersections are $1$-dimensional manifolds by the implicit function theorem, the connected component that contains $\gamma$ must coincide with $\gamma$, because  $\V_i$ has no singular points. 
This proves (a). 
To prove (c), we first observe that (a) implies that the integral curves of $\V_i$ are closed sets. 
So each integral curve of $\V_i$ contains its $\alpha$- and $\omega$-limit sets. 
If for an integral curve of $\V_i$ one of these sets was non-empty, we would have a periodic integral curve, a contradiction with (b). 
\end{proof}

The following is a well known result characterizing the global solvability of a given vector field in terms of the geometry of its integral curves. 
It is part of Theorem 6.4.2 of \cite{HormanderA2}. 

\begin{lemma}\label{vectra}
Let $M$ be a $C^{\infty}$ manifold and $\mathcal{X} :C^{\infty}(M)\to C^{\infty}(M)$ be a vector field on $M$. 
Then $\mathcal{X}$ is globally solvable if and only if  
\begin{enumerate}[label={\textnormal{(\alph*)}}]
\item No integral curve of $\mathcal{X}$ is contained in a compact subset of $M$, and
\item\label{Xconvexidade2} For each compact $K\subset M$, there exists a compact $K'\subset M$ such that every compact interval on an integral curve of $\mathcal{X}$ with end points in $K$ is contained in $K'$.
\end{enumerate}
\end{lemma}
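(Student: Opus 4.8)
Since this lemma is quoted as part of Theorem 6.4.2 of \cite{HormanderA2}, the cleanest route is to reduce the statement to the solvability of the transport equation $\mathcal{X}u = g$ along the integral curves of $\mathcal{X}$, and then to recover the two stated conditions as, respectively, the obstruction to surjectivity coming from recurrent orbits and the regularity condition needed to assemble a global smooth solution. Writing $\Phi_t$ for the flow of $\mathcal{X}$ and $\gamma$ for an integral curve, the equation reads $(u\circ\gamma)'(t) = g(\gamma(t))$, so that along each characteristic $u$ is determined, up to an additive constant, by integrating $g$. The plan is to prove the two implications separately: first that global solvability forces (1) and (2), then that (1) and (2) allow us to construct, for every $g\in C^{\infty}(M)$, a smooth global solution $u$.

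For necessity, assume $\mathcal{X}(C^{\infty}(M)) = C^{\infty}(M)$. If some integral curve $\gamma$ were contained in a compact set $L$, then either $\gamma$ reduces to a zero of $\mathcal{X}$ — in which case $\mathcal{X}u$ vanishes there and no $g$ with $g\neq 0$ at that point is attained — or $\gamma$ is a non-stationary orbit with compact closure, hence with non-empty $\alpha$- or $\omega$-limit set. In the periodic case, integrating $\mathcal{X}u=g$ over a period yields the closed condition $\oint g = 0$, which fails for a suitable $g$; in the recurrent (non-periodic, bounded) case one chooses $g\geq 0$ positive near an $\omega$-limit point, which the orbit traverses infinitely often, so that $\int_0^t g(\gamma(s))\,ds\to\infty$, forcing $u\circ\gamma$, and hence the continuous $u$ on the compact set $L$, to be unbounded — a contradiction. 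This gives (1). For (2), suppose it fails: there is a compact $K$ and a sequence of compact characteristic arcs with end points in $K$ leaving every compact set. Placing the support of a suitable $g$ along these escaping arcs makes the increments $u(\text{one end})-u(\text{other end}) = \int_{\text{arc}}g$ arbitrarily large, contradicting the boundedness of $u$ on the compact $K$. Thus both conditions are necessary.

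For sufficiency, assume (1) and (2) and fix $g\in C^{\infty}(M)$. By (1) every integral curve is a properly embedded copy of $\R$ escaping to infinity at both ends, so locally $\mathcal{X}$ can be straightened: around each point there is a flow-box chart in which $\mathcal{X}=\partial/\partial t$ with a transversal slice, and one sets $u(t,y)=\int_0^t g(s,y)\,ds$ plus a chosen value on the slice. The task is to glue these local primitives into a single smooth $u$ with $\mathcal{X}u=g$ on all of $M$. Condition (2) is exactly what makes this gluing possible within $C^{\infty}$: it bounds, uniformly over each compact set, the length of the characteristic arcs one must integrate over, so the local solutions depend smoothly on the transversal parameters and can be matched, the mismatches being solved away by the same estimate via a partition of unity subordinate to a cover by flow-box charts.

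The main obstacle is the sufficiency direction, and within it the global smoothness and well-definedness of $u$: integrating $g$ along a single characteristic is automatic, but assembling the fiberwise primitives into one globally defined $C^{\infty}$ function requires controlling how far integral curves travel before re-entering a fixed compact region, which is precisely the content of the properness condition (2). This is where the functional-analytic machinery of \cite{HormanderA2} — an open-mapping argument in the Fréchet space $C^{\infty}(M)$, equivalently the a priori estimate dual to surjectivity — does the real work. Since conditions (1) and (2) together rule out both the algebraic obstruction (closed or recurrent orbits) and the analytic obstruction (loss of smoothness under gluing), the construction goes through; I would therefore either invoke \cite{HormanderA2} directly or reproduce this two-step transport-equation argument in full detail.
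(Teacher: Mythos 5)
The paper offers no proof of this lemma at all: it is recalled verbatim as part of Theorem 6.4.2 of \cite{HormanderA2}, so your closing fallback of ``invoke \cite{HormanderA2} directly'' coincides exactly with what the paper does, and there is no in-paper argument to compare against. Your necessity half is essentially sound and completable, though more complicated than needed: for condition (1), the single choice $g\equiv 1$ settles every case at once, since an integral curve contained in a compact set is defined for all time and $u(\gamma(t))=u(\gamma(0))+t$ would be unbounded on a set where the continuous $u$ is bounded (and at a stationary point $\mathcal{X}u=0\neq 1$), so the case split into zeros, periodic and recurrent orbits is unnecessary. For condition (2), your idea is right but you must build a \emph{single} smooth $g$ working for all the escaping arcs simultaneously --- e.g.\ a locally finite sum of bump functions with pairwise disjoint supports placed in flow boxes around far points of the arcs, with heights chosen so that $\int_{\mathrm{arc}_k}g\geq k$ using a lower bound on the crossing time of each box --- rather than choosing $g$ per arc.

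The genuine gap is in your sufficiency direction. The partition-of-unity gluing you describe would fail as stated: if $\mathcal{X}u_i=\phi_i g$ with $\phi_i$ a cutoff in a flow box, the solution $u_i$ is \emph{not} compactly supported --- it is constant along each orbit after the orbit leaves $\mathrm{supp}\,\phi_i$, hence propagates along entire integral curves, which by (1) are noncompact --- so the family $\{u_i\}$ has no local finiteness and cannot simply be summed. Moreover the mismatches between local primitives on overlaps are solutions of the homogeneous equation $\mathcal{X}v=0$, i.e.\ functions constant on orbits, and making a coherent smooth choice of these first integrals is governed by the global structure of the orbit space --- this is precisely the delicate point, not a routine matching. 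What (1) and (2) actually deliver in Duistermaat--H\"ormander's proof is properness of the orbit relation, from which one gets \emph{semiglobal} solvability (solvability on a neighborhood of each compact set) together with an approximation property allowing solutions on an exhaustion $K_1\subset K_2\subset\cdots$ to be corrected by homogeneous solutions so that they converge, a Mittag-Leffler-type limiting argument; a one-step flow-box patching cannot substitute for this. So either cite \cite{HormanderA2}, as the paper does, or carry out the semiglobal-plus-exhaustion scheme in full; the intermediate sketch as written does not prove the sufficiency implication.
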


\begin{example}\label{appl}
As an application of this two lemmas we prove here that the vector fields $\V_1$ and $\V_2$ rising from the map of the introduction section $F(x) = \left(e^{x_1}\cos x_2,e^{x_1}\sin x_2,x_3, \ldots, x_n\right)$ are not globally solvable. 

For every $c > 0$, the curve $\gamma_c = \{(x_1,x_2,0,\ldots,0)\ |\ e^{x_1}\sin x_2 = c,\ x_2\in(0,\pi)\}$ is an integral curve of $\V_1$, by (a) of Lemma \ref{prisma}. 
Taking the compact set $K = \{0\}\times[0,\pi]\times\{0\}^{n-2}$, it follows that there is no compact set $K'$ as in condition (b) of Lemma \ref{vectra}, and so $\V_1$ is not globally solvable. 
Indeed, observe that $\gamma_c$ can be parametrized as $x_1 = \xi_c(x_2) \dot{=} -\ln \frac{\sin x_2}{c}$, $x_2\in (0,\pi)$. 
For $0 < c < 1$, the function $\xi_c$ has two zeros in $(0,\pi)$ and $\pi/2$ is a global minimum point between them. 
Since $\xi_c(\frac{\pi}{2}) = \ln \frac{1}{c}$ goes to $-\infty$ as $c$ tends to $0^+$, it follows that the interval of $\gamma_c$ with end points in $K$ cannot be contained in a fixed compact set $K'$ for all $c>0$. 

We can use the same idea to prove that $\V_2$ is not globally solvable.
\end{example}

\begin{proof}[Proof of statement \textnormal{(a)} of Theorem \textnormal{A}]
Without loss of generality, we assume that $i = 1$, $i_1 = 2$ and $i_2 = 3$. 
We denote by $\gamma_x$ the integral curve of $\V_1$ through $x\in\R^n$. 
Suppose on the contrary that $\V_1$ is not globally solvable. 
From (c) of Lemma \ref{prisma}, no integral curve of $\V_1$ is contained in a compact subset of $\R^n$. 
Thus from Lemma \ref{vectra}, there exist sequences $\{x_{k}\}\subset \R^n$ and $\{t_k\}$, $\{s_k\}\subset \R$, with $0<s_k<t_k$, and $a, b\in \R^n$ such that 
\begin{equation}\label{594}
x_k\to a,\ \ \ \ \gamma_{x_k}(t_k)\to b,\ \ \ \ \left|\gamma_{x_k}(s_k)\right|>k.
\end{equation} 
We take $\gamma_a$ and $\gamma_b$ the integral curves of $\V_1$ through $a$ and $b$ respectively. 
By using the facts that the level sets of $F_1$ are local transversals to the flow of $\V_1$ and $F_1$ is monotone along each integral curve of $\V_1$ (from (b) of Lemma \ref{prisma}), we conclude from (\ref{594}) and from the flow box theorem that $\gamma_a$ and $\gamma_b$ are different integral curves. 
Hence, from equation \eqref{594}, statement (a) of Lemma \ref{prisma} and the continuity of $F_i$, $i = 2, \ldots n$, it follows that there exist $c_2,\ldots,c_{n}\in\R$ such that $\gamma_a$ and $\gamma_b$ are different connected components of $\cap_{j=2}^{n} F_j^{-1}(\{c_j\})$. 

Therefore, it follows from the hypothesis that $\gamma_a$ and $\gamma_b$ are in different connected components of $F_{3}^{-1}(\{c_{3}\})$. 
Hence $\gamma_a$ and $\gamma_b$ are in two different connected components of $\cap_{j=3}^{n} F_j^{-1}(\{c_j\})$, which we denote by $\Gamma_a$ and $\Gamma_b$, respectively.
Then we take open neighbourhoods $N_a$ and $N_b$ of $a$ and $b$ respectively such that all the leaves of the foliation $\mathscr{F}\left(F_{2}\right)$ crossing $N_a$ intersect $\Gamma_a$ and all the leaves of  $\mathscr{F}\left(F_{2}\right)$ crossing $N_b$ intersect $\Gamma_b$ (this is possible since $\mathscr{F}(F_2)$ is transversal to the foliation given by the connected components of $\cap_{j=3}^n F_j^{-1}(\{c_j\})$). 
We then take $k$ big enough in order that $x_k\in N_a$ and $\gamma_{x_k}(t_k)\in N_b$ and take $L_2\in\mathscr{F}\left(F_{2}\right)$ containing $\gamma_{x_k}$. 
This leaf $L_2$ intersects $\Gamma_a$ and $\Gamma_b$ and thus $\cap_{j=3}^{n} F_j^{-1}(\{c_j\}) \cap L_2$ is disconnected, a contradiction with the hypothesis.
\end{proof}

\noindent
The proof of statement (b) of Theorem A will be given at the end of Section \ref{dimensao3}. 

\section{HRc's in $\R^2$ revisited: proof of Proposition A}\label{revisited}
In this section we recall the definition of a hRc of $\mathscr{F}(f)$ where $f: \R^2 \to \R$ is a $C^{\infty}$ submersion. 
Then we describe this definition in terms of integral curves of $H_f$ and prove Proposition A. 

\begin{definition}[\cite{gutierrez, Gutierrez1}]\label{vigesima}
Let $f:\R^2\to\R$ be a $C^{\infty}$ submersion,  $h_0(x,y)=xy$ and $B=\left\{(x,y)\in[0,2]\times[0,2]\ |\ 0<x+y\leq 2\right\}$. 
We say that $\mathcal{A}\subset \R^2$ is a \emph{half-Reeb component}, or simply a \emph{hRc}, of $\mathscr{F}(f)$ if there exists a homeomorphism $T:B\to\mathcal{A}$ which is a topological equivalence between $\mathscr{F}(h_0)|_{B}$ and $\mathscr{F}(f)|_{\mathcal{A}}$ with the following properties:
\begin{enumerate}[label={\textnormal{(\roman*)}}]
\item The segment $\{(x,y)\in B\ |\ x+y=2\}$ is sent by $T$ onto a transversal section for the leaves of $\mathscr{F}(f)$ in the complement of $T(1,1)$. 
This section is called the \emph{compact edge} of $\mathcal{A}$.
\item Both segments $\{(x,y)\in B\ |\ x=0\}$ and $\{(x,y)\in B\ |\ y=0\}$ are sent by $T$ onto full half-leaves of $\mathscr{F}(f)$. 
These two half-leaves are called the \emph{non-compact edges} of $\mathcal{A}$.
\end{enumerate} 
\end{definition}

It is a known fact that the leaves of $\mathscr{F}(f)$ are the integral curves of $H_f$ (in Lemma \ref{prisma} this is proved when $f$ is a component of a local invertible map $\R^2\to \R^2$). 
Before proving Proposition A, we give an alternative definition of a half-Reeb component $\mathcal{A}$ of $\mathscr{F}(f)$ by using the integral curves of the vector field $H_f$. 
Given $x\in \R^2$, we denote by $\gamma_x(t)$ the integral curve of $H_f$ such that $\gamma_x(0) = x$. 
We denote by $S$ an arc  with end-points $p$ and $q$ in two different integral curves of $H_f$ such that there exists a point $w \in S\backslash \{p, q\}$ with the following  properties: 
\begin{enumerate}[label={\textnormal{(\Roman*)}}]
\item The arcs $S_1 \subset S$, from $p$ to $w$, and $S_2\subset S$, from $q$ to $w$, are transversal sections to the flow of $H_f$ away from the point $w$. 
\item For every point $x$ in $S_1\backslash\{p,w\}$ the integral curve $\gamma_x$ crosses a point $y(x)$ in $S_2\backslash\{w, q\}$. 
\item The map $S_1 \backslash\{p,w\} \ni x \mapsto y(x) \in S_2\backslash\{w,q\}$ is a homeomorphism and extend homeomorphicaly to $S_1$ by setting $y(p) = q$ and $y(w) = w$. 
\end{enumerate}
We let $G$ be the union of the intervals from $x$ to $y(x)$ of $\gamma_x$, for all $x\in S_1\backslash \{p\}$, and define $\mathcal{A} = G\cup (\gamma_p \cap \overline{G}) \cup (\gamma_q \cap \overline{G})$. 

It is clear that a hRc of $\mathscr{F}(f)$ satisfies the properties of the set $\mathcal{A}$ just constructed. 
On the other hand, we \emph{claim that this set $\mathcal{A}$ is a hRc of $\mathscr{F}(f)$ with compact edge being the arc $S$ and non-compact edges the half-solutions $\gamma_p \cap \overline{G}$ and $\gamma_q \cap \overline{G}$.} 

Indeed, consider sequences $\{p_n\}$ and $\{q_n\}$ in $\gamma_p$ and $\gamma_q$, respectively, such that $p_n = \gamma_p(t_n)$ and $q_n = \gamma_q(s_n)$ with $\{t_n\}$ increasing and $\{s_n\}$ decreasing such that  $t_n$ converges to the upper limit of the maximal interval of definition of $\gamma_p$ and $s_n$ converges to the lower limit of the maximal interval of definition of $\gamma_q$. 
Let $T_{p_n}$ and $T_{q_n}$ be transversal sections to the flow through $p_n$ and $q_n$ contained in $\mathcal{A}$, such that $T_{p_n} \cap T_{q_n} = \emptyset$. 

We now construct a decreasing sequence $\{x_n\}$ contained in $S_1$ with the property that for each $n \in \N$ there exists $x_n \in S_1$ such that the integral curve $\gamma_{x_n}$ intersects $T_{p_n}$ and $T_{q_n}$ in points $u_n$ and $v_n$, respectively. 
We denote by $\mathcal{A}_n$ the region bounded by the interval of curves $[p, p_n]$ of $\gamma_p$, $[p_n, u_n]$ of $T_{p_n}$, $[u_n, v_n]$ of $\gamma_{x_n}$, $[v_n, q_n]$ of $T_{q_n}$, $[q_n, q]$ of $\gamma_{q_n}$, and $S$. 

We clearly have $\mathcal{A}_n \subset \mathcal{A}_{n+1}$, $x_n \to p$ and $y(x_n) \to q$. 
We also have $\cup_n \mathcal{A}_n = \mathcal{A}$. 

Now we apply similar construction for the foliation given by the level sets of $h_0(x,y) = x y$ in $B$ (this foliation coincides with the one given by the integral curves of the vector field $H_{h_0} = -x \partial_1 +y \partial_2$ in $B$), i.e. we build a sequence of subsets $\mathcal{A}_n'$ such that $B = \cup_n \mathcal{A}_n'$ (as above, consider sequences $p_n' = (t_n',0)$ and $q_n' = (0, s_n')$ in $(0,2]\times\{0\}$ and $\{0\} \times (0,2]$, respectively, with $t_n'$ and $s_n'$ decreasing to $0$. 
Then take transversal sections $T_{p_n'}$ and $T_{q_n'}$, and consider $x_n'$, $u_n'$, $v_n'$ with similar meaning as above and construct $\mathcal{A}'$ to be the region bounded by the intervals of curves $[(2,0),p_n']$ of $(0,2)\times \{0\}$, $[p_n', u_n']$ of $T_{p_n'}']$, $[u_n', v_n']$ of $\gamma_{x_n'}$, $[v_n', q_n']$ of $T_{q_n'}$, $[q_n', (0,2)]$ of $\{0\} \times (0,2]$, and the segment $x + y = 2$). 

We then define $T_n : \mathcal{A}_n \to B$ a homeomorphism onto the image $\mathcal{A}_n' = T_n (\mathcal{A}_n)$ that carries leaves of $\mathscr{F}(f)|_{\mathcal{A}_n}$ onto leaves of $\mathscr{F}(h_0)|_{\mathcal{A}_n'}$. 
It is clear that we can extend $T_n$ to $T_{n+1} : \mathcal{A}_{n+1} \to B$ with analogous properties. 

We finally define $T: \mathcal{A} \to B$ by $T(z) = T_n(z)$ if $z \in \mathcal{A}_n$. 
By construction it is clear that $T$ is a homeomorphism. 
This proves that this set $\mathcal{A}$ is a hRc of $\mathscr{F}(f)$. 

We also observe that the existence of hRc is equivalent to the existence of inseparable leaves on the foliation $\mathscr{F}(f)$, see \cite{teixeira} for details.

\begin{proof}[Proof of Proposition \textnormal{A}]
In this proof, given $x\in \R^2$, we denote by $I_x$ the maximal interval of solution of $\gamma_x$, where as above $\gamma_x(t)$ is the integral curve of $H_f$ such that $\gamma_x(0) = x$. 
Further, we denote $\gamma_x^+ = \{\gamma_x(t)\ |\ t \geq 0,\ t\in I_x \}$ and $\gamma_x^- = \{\gamma_x(t)\ |\ t \leq 0,\  t\in I_x \}$. 
Finally, given a set $A\subset \R^2$, we say that \emph{$A$ contains the positive end} (respectively, \emph{negative end}) \emph{of $\gamma_x$} if there exists $t_0 \in I_x$,  such that $\gamma_x(t) \in A$ for all $t \geq t_0$ (respectively for all $t \leq t_0$). 

If $H_f$ is not globally solvable, by using Lemma \ref{vectra} and acting analogously as in the first paragraph of the proof of statement (a) of Theorem A, it is simple to conclude that $f$ has a disconnected level set. 
So (a) implies (b). 
  
Also it is clear that (b) implies (c), because the existence of a hRc in the foliation $\mathscr{F}(f)$, according to Lemma \ref{prisma}, would make condition (b) in Lemma \ref{vectra} to fail. 

Thus it remains to prove that (c) implies (a). 
We assume (c) and suppose on the contrary that there exists $c\in\R$ such that $f^{-1}(\{c\})$ is not connected. 
We will construct a hRc of $\mathscr{F}(f)$ prescribing a set $\mathcal{A}$ as explained right after Definition \ref{vigesima}, obtaining a contradiction with (c). 
We take $p$ and $q$ in two distinct connected components of $f^{-1}(\{c\})$ and denote by $\Gamma$ the open connected region of $\R^2$ bounded by the integral curves $\gamma_p$ and $\gamma_q$ (recall statement (a) of Lemma \ref{prisma}). 
Let $\lambda: [0,1]\to\R^2$ be a $C^{\infty}$ injective curve such that $\lambda(0) = p$, $\lambda(1) = q$ and $\lambda((0,1))\subset \Gamma$. 
The curve $\lambda$ separates $\Gamma$ into two open connected regions that we denote by $\Gamma_1$ and $\Gamma_2$, respectively. 

Either the global maximum or the global minimum of $f\circ \lambda(t)$ is attained at a point $t_m \in (0,1)$. 
Hence $\gamma_{\lambda(t_m)}$ is entirely contained in $\Gamma_1 \cup \lambda([0,1])$ or in $\Gamma_2\cup \lambda([0,1])$. 
In particular or $\Gamma_1$ or $\Gamma_2$ contain both the positive and the negative ends of $\gamma_{\lambda(t_m)}$, respectively. 
Therefore, the set 
$$
T = \{t \in (0, 1)\ |\ \textnormal{ both the ends of } \gamma_{\lambda(t)} \textnormal{ are contained in } \Gamma_1 \textnormal{ or in } \Gamma_2 \} 
$$
is non-empty. 
Let $t_1 \in [0,1]$ be the greatest lower bound of $T$. 
We have two possibilities: either $t_1 \in T$ or $t_1 \notin T$. 

In the first possibility, we assume without loss of generality that both the ends of $\gamma_{\lambda(t_1)}$ are in $\Gamma_1$. 
It is clear from the definition of $t_1$ that $t_1 > 0$ and that $\gamma_{\lambda(t_1)}$ does not intersect $\lambda( [0,t_1) )$. 
We take a cross section to the flow of $H_{f}$, $\alpha: [a, b] \to \R^2$, such that $\alpha(a) = \lambda(t_1)$ and such that $\alpha\left( (a,b] \right)$ and $\lambda((0, t_1))$ are contained in the same connected region of $\Gamma$ defined by $\gamma_{\lambda(t_1)}$. 
For $b' \in (a, b)$ close enough to $a$, we conclude that the integral curves of $H_{f}$ crossing $\alpha((a, b'))$ have one end in $\Gamma_1$ and the other end in $\Gamma_2$. 
Without loss of generality, we assume that the negative ends of these curves are in $\Gamma_1$. 
From continuous dependence, by taking $b'$ smaller if necessary, we can assume that for each $r \in (a, b')$, 
$$
s_r = \sup \{s\ |\ \gamma_{\alpha(r)}(s) \in \Gamma_1\cup \lambda \} \in (0,\infty). 
$$
Clearly there is $t(r) \in (0,1)$ such that  
\begin{equation}\label{enters}
\gamma_{\alpha(r)} (s_r) = \lambda(t(r)),\ \ \ \ \gamma_{\alpha(r)} (s) \in \Gamma_2,\ \forall s> s_r,\ s\in I_{\alpha(r)}. 
\end{equation}
The function $(a, b') \ni r\mapsto t(r)$ is clearly injective. 
We \emph{claim that this function is also monotone}. 
The claim follows if we prove that: for each $r_1, r_2, r_3 \in (a,b')$ with $r_1 < r_2 < r_3$, the point $t(r_2)$ is contained in the interval determined by $t(r_1)$ and $t(r_3)$. 
To prove this, consider the unbounded open set $A$ contained in $\Gamma_2$ whose border is the union of the curves $\gamma_{\lambda(t(r_1))}^+$, $\gamma_{\lambda(t(r_3))}^+$ and the interval of $\lambda(t)$ with end points $\lambda(t(r_1))$ and $\lambda(t(r_3))$. 
The curve $\gamma_{\alpha(r_2)}$ is contained in the region bounded by $\gamma_{\alpha(r_1)}$ and $\gamma_{\alpha(r_3)}$. 
In particular, this curve will enter the set $A$ according to \eqref{enters}. 
The only way to do that is crossing the interval of $\lambda(t)$ with end points $\lambda(t(r_1))$ and $\lambda(t(r_3))$. 
This proves that $t(r_2)$ is between $t(r_1)$ and $t(r_3)$. 

From the claim it follows that there exists $t_2 = \lim_{r\to a^+} t(r)$. 
We consider the integral curve $\gamma_{\lambda(t_2)}$. 
We take a cross section $\beta: [c,d] \to \R^2$ with $\beta(c) = \lambda(t_2)$ and $\beta((c, d])$ contained in the open connected region defined by $\gamma_{\lambda(t_2)}$ containing the curve $\gamma_{\lambda(t_1)}$. 
By taking $b'$ and $d$ smaller if necessary, we can assume that $\alpha((a,b'])$ and $\beta((c,d])$ are contained in the open connected region defined by $\gamma_{\lambda(t_1)}$ and $\gamma_{\lambda(t_2)}$. 
For $r_0$ close enough to $a$, we conclude by construction that all the curves $\gamma_{\alpha(r)}$, $r \in (a, r_0]$, intersect $\beta((c,d])$. 
Let $u_0 \in (c, d]$ and $s_0 > 0$ such that $\gamma_{\alpha(r_0)} (s_0) = \beta (u_0)$. 
We take the curve 
$$
S = \alpha([a, r_0]) \cup \gamma_{\alpha(r_0)}([0, s_0]) \cup \beta([c, u_0]). 
$$ 
Using the flow box theorem along the interval of curve $\gamma_{\alpha(r_0)}([0, s_0])$, we can modify the continuous curve $S$ to a $C^{\infty}$ curve $S'$ which is transversal to the flow of $H_{f}$ up to a point $w'$. 
From construction, the arc $S'$, with end-points $p' = \lambda(t_1)$ and $q'= \lambda(t_2)$, satisfy the properties (I), (II) and (III) prescribed after Definition \ref{vigesima}. 
Thus we obtain a hRc of $\mathscr{F}(f)$ with compact edge being $S'$ and non-compact edges being $\gamma_{\lambda(t_1)}^+$ and $\gamma_{\lambda(t_2)}^-$ finishing the proof in case $t_1\in T$. 

Now in the case $t_1 \notin T$, it follows that $\gamma_{\lambda(t_1)}$ has one end in $\Gamma_1$ and one end in $\Gamma_2$ or $t_1 = 0$. 
In both cases, we take a cross section $\alpha: [a,b] \to \R^2$ such that $\alpha(a) = \lambda(t_1)$ and $\alpha((a,b])$ is contained in the open region determined by $\gamma_{\lambda(t_1)}$ containing $q$. 
From the definition of $t_1$ there exists $b' \in (a,b)$ such that each integral curve of $H_{f}$ crossing $\alpha((a,b'))$ has its both ends in $\Gamma_1$ or in $\Gamma_2$ (we use the fact that $\alpha$ is a cross section). 
We assume without loss of generality that they have its both ends in $\Gamma_1$. 
Assuming that the negative end of $\gamma_{\lambda(t_1)}$ is in $\Gamma_1$, it follows from continuous dependence, taking $b'$ smaller if necessary, that for all $r \in (a, b')$, 
$$
s_r = \sup \{s\ |\ \gamma_{\alpha(r)}(s) \in \Gamma_2\cup \lambda \} \in (0,\infty). 
$$ 
As above we define $t(r)$ to be the point of $(0,1)$ such that $\gamma_{\alpha(r)}(s_r) = \lambda(t(r))$. 
Then similar arguments show that $t(r)$ is monotone and as above we construct a hRc with one of its non-compact edges being $\gamma_{\lambda(t_1)}^+$.
\end{proof}

In the above proof that (c) implies (a), we have proven the following qualitative result about the hRc that rises from a disconnected level set of a submersion: 

\begin{corB}\label{quali}
Let $f: \R^2 \to \R$ be a $C^\infty$ submersion. 
Let $\gamma_1$ and $\gamma_2$ be two distinct connected components of a level set $f^{-1}(\{c\})$ and $\Gamma$ be the open connected region whose border is $\gamma_1 \cup \gamma_2$. 
If $\lambda$ is an injective $C^\infty$ curve contained in $\overline{\Gamma}$ connecting $\gamma_1$ and $\gamma_2$, then there exists a hRc of $\mathscr{F}(f)$ contained in $\overline{\Gamma}$ whose non-compact edges intercept the curve $\lambda$. 
\end{corB}

\section{Half-Reeb components and solvability in $\R^3$}\label{dimensao3}

Now we recall the concept of half-Reeb component for a $C^{\infty}$ submersion $f:\R^3\to\R$ as introduced in \cite{maquera}. 
We first define the concept of \emph{vanishing cycle}  for $\mathscr{F}(f)$. 

\begin{definition}\label{furgao0}
Given a $C^{\infty}$ submersion $f:\R^3\to\R$, we say that a $C^{\infty}$ embedding $F_0: S^1\to\R^3$ is a \emph{vanishing cycle} for $\mathscr{F}(f)$ if it satisfies:
\begin{enumerate}[label={\textnormal{(\roman*)}}]
\item\label{1f} $F_0\left(S^1\right)$ is contained in a leaf $L_0$ of $\mathscr{F}(f)$ and is \emph{not} homotopic to a point in $L_0$.
\item $F_0$ can be extended to a $C^{\infty}$ embedding $F:[-1,2]\times S^1\to\R^3$ such that for all $t\in(0,1]$ there is a $2$-disc $D_t$ contained in a leaf $L_t$ with $\partial D_t=F\left(\{t\}\times S^1\right)$.
\item For all $x\in S^1$ the curve $t\in[-1,2]\mapsto F(t,x)$ is transverse to the foliation $\mathscr{F}(f)$, and for all $t\in(0,1]$ the disc $D_t$ depends continuously on $t$.
\end{enumerate}
We say that the leaf $L_0$ \emph{supports} the vanishing cycle $F_0$ and that the map $F$ is \emph{associated} to $F_0$.
\end{definition}

\begin{definition}\label{furgao}
The \emph{half-Reeb component of $\mathscr{F}(f)$ associated to the vanishing cycle $F_0$} is the region
$$
\mathcal{A}=\left(\cup_{t\in(0,1]}D_t\right)\cup L\cup F_0\left(S^1\right),
$$
where $L$ is the  connected component  of $L_0\backslash F_0\left( S^1\right)$ contained in the closure of  $\cup_{t\in(0,1]}D_t$.\\
We say that the transversal  $F\left([0,1]\times S^1\right)$ (to the foliation $\mathscr{F}(f)$) is the \emph{compact face} of $\mathcal{A}$ and $L\cup F_0\left(S^1\right)$ is the \emph{non-compact face} of $\mathcal{A}$. 
\end{definition}
Throughout the paper we simply say that $\mathcal{A}$ is a half-Reeb component of $\mathscr{F}(f)$, which we abbreviate as hRc. 

Observe that a hRc of a $2$-dimensional foliation of $\R^3$ is in some sense a ``rotation'' of a hRc of a $1$-dimensional foliation of $\R^2$. 
Next example motivates this idea. 

\begin{example}
Let $g:\R^2\to\R$ be defined by $g(x,y)=x(1-xy^2)$. 
It is easy to see that $g$ is a submersion and that the set $\{(x,y)\ |\ -1/\sqrt{x}\leq y\leq 1/\sqrt{x}, \ 0<x\leq 1\}$  is a hRc of $\mathscr{F}(g)$ with compact edge being the segment $\{(1,y)\ |\ y\in [-1,1]\}$ and non-compact edges being the curves $y=1/\sqrt{x}$ and $y=-1/\sqrt{x}$, with $x\in (0,1)$.  

Now we rotate this function around the $x$-axis and obtain a hRc in $\R^3$.  
Precisely, we define $f:\R^3\to\R$ by $f(x,y,z)=g(x,y^2+z^2)=x-x^2(y^2+z^2)^2$. 
It is easy to see that $f$ is a submersion and that the set $\{(x,y,z)\ |\ y^2+z^2\leq 1/\sqrt{x},\ 0<x\leq1\}$ is a hRc of $\mathscr{F}(f)$, now according to Definition \ref{furgao}. 
Here the compact face is the set $\{(1,y,z)\ |\ y^2+z^2\leq 1\}$ and the non-compact face is the set  $\{(x,y,z)\ |\ y^2+z^2 = 1/\sqrt{x}, \ 0 < x \leq 1\}$.    
\end{example}

\begin{lemma}\label{unbounded}
Let $\mathcal{A}$ be a hRc according to Definition \textnormal{\ref{furgao}}. 
Then $\textnormal{int} (\mathcal{A})$ is unbounded.
\end{lemma}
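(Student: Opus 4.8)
\emph{Strategy and reduction.} The plan is to argue by contradiction, assuming that $\textnormal{int}(\mathcal{A})$ is bounded and deriving that the vanishing cycle $F_0(S^1)$ would be homotopic to a point in its supporting leaf $L_0$, contradicting condition \eqref{1f} of Definition \ref{furgao0}. First I would record the elementary observation that the open solid swept by the discs, $W=\cup_{t\in(0,1]}D_t$, satisfies $\cup_{t\in(0,1)}\textnormal{int}_{L_t}(D_t)\subseteq\textnormal{int}(\mathcal{A})$: indeed, for $t\in(0,1)$ and a point $x$ lying in the interior of $D_t$ relative to its leaf $L_t$, the continuous dependence of $D_t$ on $t$ together with the transversality in item (3) of Definition \ref{furgao0} (which makes $f$ strictly monotone along each curve $t\mapsto F(t,x)$, hence makes the leaves $L_t$ sweep a genuine transverse direction) shows that $W$ contains a full $\R^3$-ball around $x$. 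Since these relative interiors are dense in $W$, boundedness of $\textnormal{int}(\mathcal{A})$ forces $W$ to be bounded, so its closure $K=\overline{W}$ is compact.

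\emph{Transporting the discs into $L_0$.} Next I would exploit that $f$ is a submersion to carry all the discs $D_t$, for $t$ small, into the single leaf $L_0$. Writing $c_t$ for the constant value of $f$ on $D_t\subseteq L_t=f^{-1}(\{c_t\})$, the monotonicity from item (3) gives $c_t\to c_0$ as $t\to0^+$, where $c_0$ is the value of $f$ on $L_0$. On the compact set $K$ the gradient $\nabla f$ is nonzero with $|\nabla f|$ bounded below, so the normalized gradient flow of $f$ produces, for all sufficiently small $t$, a diffeomorphism $\psi_t$ of a neighborhood of $K$, close to the identity, carrying $f^{-1}(\{c_t\})$ into $f^{-1}(\{c_0\})$. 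Then $\psi_t(D_t)$ is an embedded $2$-disc in $f^{-1}(\{c_0\})$; since the points of $D_t$ accumulate on $L_0$ as $t\to0^+$ and $\psi_t$ is near the identity, for small $t$ this disc lies in the leaf $L_0$ itself. Its boundary $\psi_t(\partial D_t)=\psi_t(F(\{t\}\times S^1))$ is an embedded circle in $L_0$ which, because $F$ is an embedding and $\psi_t$ is near the identity, is $C^0$-close to $F_0(S^1)=F(\{0\}\times S^1)$ and hence isotopic to it inside the surface $L_0$. As $\psi_t(\partial D_t)$ bounds the disc $\psi_t(D_t)$ in $L_0$, it is null-homotopic there, and therefore so is $F_0(S^1)$, contradicting item \eqref{1f} of Definition \ref{furgao0}. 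This contradiction shows that $\textnormal{int}(\mathcal{A})$ is unbounded.

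\emph{Main obstacle.} The step I expect to be the crux is the transport in the second paragraph: one must check carefully that, on the compact set $K$, the level sets $f^{-1}(\{c_t\})$ approach $L_0$ strongly enough (not merely in Hausdorff distance) that the flow $\psi_t$ genuinely carries the embedded disc $D_t$ onto an embedded disc in the \emph{correct} connected component $L_0$ of $f^{-1}(\{c_0\})$, and that the transported boundary remains isotopic to $F_0(S^1)$. The only data available are that each $D_t$ is a $2$-disc depending continuously on $t$ and that $f$ is a submersion, so the argument must rest on the uniform control of $\nabla f$ on $K$ rather than on any a priori smoothness of the family $t\mapsto D_t$; the preliminary claim that $\textnormal{int}(\mathcal{A})$ contains the relative interiors of the $D_t$, used to pass from boundedness of $\textnormal{int}(\mathcal{A})$ to compactness of $K$, likewise requires the continuous dependence in Definition \ref{furgao0} to be invoked with some care.
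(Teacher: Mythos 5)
Your overall strategy coincides with the paper's — assume $\textnormal{int}(\mathcal{A})$ is bounded, transport a small disc $D_t$ into the leaf $L_0$, and contradict item \eqref{1f} of Definition \ref{furgao0} — but your transport device differs: you push $D_t$ into the level set $f^{-1}(\{c_0\})$ by the normalized gradient flow, whereas the paper projects $D_t$ onto $\overline{L}$ by a tubular-neighborhood retraction $\pi:U\to\overline{L}$ (available because $\overline{L}$ is compact under the boundedness assumption) whose fibers are transverse to $\mathscr{F}(f)$. This difference is what creates the step you yourself flag as the main obstacle, and your proposed justification of that step is not valid as stated. The gap is the inference ``$\psi_t(\partial D_t)$ is $C^0$-close to $F_0(S^1)$, hence lies in $L_0$ and is isotopic to $F_0(S^1)$ inside $L_0$.'' Ambient proximity to a leaf implies nothing about membership in that leaf or about homotopy classes within it: distinct connected components of the same level set can be ambiently arbitrarily close to one another, and this inseparability of leaves is precisely the phenomenon that half-Reeb components exhibit, so in this context it is the heart of the matter rather than a technicality. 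As written, $\psi_t(D_t)$ is only known to be an embedded disc in $f^{-1}(\{c_0\})$; a priori it could lie in a component other than $L_0$, and even with boundary in $L_0$, closeness measured in $\R^3$ does not by itself produce a homotopy within the surface $L_0$.

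The repair, which you gesture at but do not carry out, is a flow-box (plaque) argument: cover the compact set $F_0(S^1)$ by finitely many flow boxes of $f$; in each box the slice of $f^{-1}(\{c_0\})$ is a single plaque, and that plaque lies in $L_0$ because it meets $F_0(S^1)$. Since the gradient-flow arcs from $F(t,x)$ to $\psi_t(F(t,x))$ have length $O(|c_t-c_0|)$ uniformly in $x$, for $t$ small every $\psi_t(F(t,x))$ lies in the plaque of the box around $F(0,x)$, hence in $L_0$ and plaque-close to $F_0(x)$; connectedness of $\psi_t(D_t)$ then places the whole disc in $L_0$, and plaque-closeness gives a free homotopy (isotopy is more than you need) between $\psi_t(\partial D_t)$ and $F_0$ inside an annular neighborhood of $F_0(S^1)$ in $L_0$, completing your contradiction. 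Note how the paper's construction avoids this entirely: $\pi$ is chosen so that $\pi^{-1}\left(F_0\left(S^1\right)\right)\cap D_t=\partial D_t$, i.e.\ $\pi\circ F(t,\cdot)=F_0$ exactly, so composing $\pi$ with a contraction of the disc $D_t$ is already a null-homotopy of $F_0$ itself in $\overline{L}\subset L_0$, with no boundary-matching step. Your first step (boundedness of $\textnormal{int}(\mathcal{A})$ forces compactness of the closure of $\cup_{t}D_t$) is sound, and is in fact a point the paper's own proof uses only implicitly when it asserts that $\overline{L}$ is compact.
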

\begin{proof}
Suppose on the contrary that $\textnormal{int} (\mathcal{A})$ is bounded. 
So $\overline{L}$ is compact, and hence we can cover it by a tubular neighborhood $U$ and construct $\pi:U\to \overline{L}$ a $C^{\infty}$ submersion with the property that $\pi(w)=w$ and $\pi^{-1}(\{w\})$ is transversal to $\mathscr{F}(f)$ for each $w\in \overline{L}$ (see Lemma 2 of Chapter 4 of  \cite{Camacho}). 
It is clear that this submersion can be taken so that $\pi^{-1}\left(F_0\left(S^1\right)\right)\cap D_t=\partial D_t$ for each $t\in(0,\delta)$, for some $\delta>0$. 

Using continuity, we can choose $t\in(0,\delta)$ small enough so that $D_t\subset U$. 
We fix $p_0\in L$ and define $p_t$ to be the unique point of intersection of $\pi^{-1}(\{p_0\})$ with $D_t$. 
Since $D_t$ is contractible, there exists a $C^{\infty}$ map $H_t:S^1\times[0,1]\to D_t$ such that $H_t(x,0) = F(t,x)$ and $H_t(x,1) = p_t$ for all $x\in S^1$. 
We define $H:S^1\times [0,1]\to \overline{L}$ by $H(x,s) = \pi\circ H_t(x,s)$. 
This is a $C^{\infty}$ map such that $H(x,0) = F(0,x)$ and $H(x,1) = p_0$ for all $x\in S^1$, which proves that $F_0\left(S^1\right)$ is homotopic to $p_0$, a contradiction with (i) of Definition \ref{furgao0}.   
\end{proof}

\begin{proof}[Proof of Theorem \textnormal{B}]
Let $\mathcal{A}$ be a hRc of $\mathscr{F}(F_i)$. 
We will use the notation of definitions \ref{furgao0} and \ref{furgao}. 
Let $K = F\left([0,2]\times S^1\right)$, where $F_0: S^1\to\R^3$ is a vanishing cycle for the foliation $\mathscr{F}(F_i)$. 
For each $n\in\N$, let $x_n\in\textnormal{int}\left(\mathcal{A}\right)$ such that $x_n\notin B(0,n)$ (this is possible from Lemma \ref{unbounded}). 
For $n$ sufficiently large, there is $t_n\in (0,2]$ such that $x_n\in D_{t_n}$. 
Let $j \in\{1,2,3\}$, $j \neq i$, and consider $\gamma_{x_n}$ the integral curve of $\V_j$ passing through $x_n$. 
From (a) of Lemma \ref{prisma}, $\gamma_{x_n}\subset L_{t_n}$. 
Since $D_{t_n}$ is bounded, statement (c) of the same lemma asserts the existence of $s_n^1, s_n^2$, $s_n^1 < 0 < s_n^2$, in the interval of definition of $\gamma_{x_n}$, such that 
$$
\gamma_{x_n}(s_n^{\tau}) \in \partial D_{t_n}\subset K,\  \textrm{for } \tau = 1,2.
$$
This shows that $\V_j$ is not globally solvable by Lemma \ref{vectra}. 
\end{proof}

\begin{proof}[Proof of statement \textnormal{(b)} of Theorem \textnormal{A}]
Without loss of generality we assume that $i=1$, $k=2$ and $j=3$. 
From Theorem B it follows that $\mathscr{F}(F_2)$ does not have hRc. 
From Proposition 2.2 of \cite{maquera} it follows that all the leaves of $\mathscr{F}(F_2)$ are diffeomophic to $\R^2$. 

We suppose on the contrary that there are $c_3\in\R$ and $L_2\in\mathscr{F}(F_2)$ such that $F_3^{-1}(\{c_3\})\cap L_2$ is not connected.
We consider a diffeomorphism $H: L_2\to\R^2$ and the map $G = (G_1, G_2): \R^2\to\R^2$ defined by $G(x) = (F_1, F_3)\circ H^{-1}(x)$. 
It follows that $\det DG(x)\neq 0$ in $\R^2$, because $F_1$ and $F_3$ are transversal to $L_2$, and that $G_2^{-1}(\{c_3\})$ is not connected.  
From Proposition \ref{propa}, the foliation $\mathscr{F}(G_2)$ of $\R^2$ has a hRc. 
Thus going back to $L_2$, we have, in the light of Lemma \ref{prisma}, integral curves of $\V_1$  violating item (b) of Lemma \ref{vectra}, a contradiction with the global solvability of $\V_1$.
\end{proof}

\section{A necessary condition for local invertibility}\label{finalresult}
In this section we prove Theorem C. 

We begin defining the concept of mild half-Reeb component. 

\begin{definition}\label{marina}
Let $f:\R^n\to\R$ be a $C^{\infty}$ submersion. 
We say that a region $\mathcal{B}\subset\R^n$ is a \emph{mild half-Reeb component}, or simply a \emph{mhRc}, of the foliation $\mathscr{F}(f)$ if it satisfies 
$$
\mathcal{B}=\overline{\bigcup_{k\in \N}P_k},
$$
where $P_k$ is a bounded connected open set whose boundary is $C^1$ by parts and given by the union of a subset $Q_k$ of a leaf of $\mathscr{F}(f)$ and a hypersurface $L_k$ of $\R^n$. 
Furthermore, $P_{k}\subset P_{k+1}$ for all $k\in\N$ and there exists a \emph{bounded} hypersurface $L$ of $\R^n$ such that $L_k\subset L_{k+1}\subset L$ for all $k\in\N$.
\end{definition}

\begin{example}
Half-Reeb components in $\R^2$ and $\R^3$ are mild half-Reeb components. 
\end{example}

To prove Theorem C, we will use the vector fields $\V_i$ calculated in coordinates:
\begin{equation}\label{lula}
\V_{i} = \sum_{j=1}^n\coff (a_{ij})\partial_j,
\end{equation}
where $\coff(a_{ij})$ is the cofactor of the entry $a_{ij}$ in the matrix $DF$.

We need the following technical result.

\begin{lemma}\label{ghjkk}
Let $F = (F_1,\ldots, F_n): \R^n\to\R^n$ be a $C^{\infty}$ map. 
Then 
$$
\textnormal{div}(F_i\V_{i}) = \det DF, 
$$
for $i =1,\ldots,n$. 
\end{lemma}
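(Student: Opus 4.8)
The plan is to read off the components of $f_i\V_i$ from the coordinate expression \eqref{lula} and apply the Leibniz rule for the divergence of a scalar multiple of a vector field. Since $\V_i=\sum_{j=1}^n\coff(a_{ij})\partial_j$, the vector field $f_i\V_i$ has $j$-th component $f_i\coff(a_{ij})$, and the product rule gives
\[
\textnormal{div}(f_i\V_i)=\sum_{j=1}^n\partial_j\big(f_i\coff(a_{ij})\big)=\V_i(f_i)+f_i\,\textnormal{div}(\V_i),
\]
where the first summand collects the terms $(\partial_j f_i)\coff(a_{ij})$ and the second the terms $f_i\,\partial_j\coff(a_{ij})$. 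This reduces the lemma to two independent claims: that $\V_i(f_i)=\det DF$ and that $\textnormal{div}(\V_i)=0$.

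The first claim is immediate: evaluating the definition \eqref{logus} of $\V_i$ at $\phi=f_i$ returns $\det DF$, and in coordinates $\V_i(f_i)=\sum_j(\partial_j f_i)\coff(a_{ij})=\sum_j a_{ij}\coff(a_{ij})$ is exactly the Laplace expansion of $\det DF$ along its $i$-th row. This is the case $j=i$ of the identity $\V_i(f_j)=\delta_{ij}\det DF$ already recorded in the proof of Lemma \ref{prisma}.

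The remaining claim $\textnormal{div}(\V_i)=0$ is the heart of the matter, and the step I expect to be the main obstacle; it is the assertion that each row of the cofactor matrix of a Jacobian is divergence free (the classical Piola identity). The route I would take is via differential forms. Equipping $\R^n$ with the volume form $dV=dx_1\wedge\cdots\wedge dx_n$, one has $d(\iota_{\mathcal{X}}dV)=(\textnormal{div}\,\mathcal{X})\,dV$ for any vector field $\mathcal{X}$, so it suffices to show that $\iota_{\V_i}dV$ is closed. A comparison of coefficients shows that $\iota_{\V_i}dV$ equals, up to an overall sign, the $(n-1)$-form
\[
\omega_i=df_1\wedge\cdots\wedge df_{i-1}\wedge df_{i+1}\wedge\cdots\wedge df_n,
\]
because the coefficient of $dx_1\wedge\cdots\wedge\widehat{dx_j}\wedge\cdots\wedge dx_n$ in $\omega_i$ is the $(i,j)$-minor of $DF$, which is $\pm\coff(a_{ij})$. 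Since $d$ is an antiderivation and each factor $df_k$ is exact, $d\omega_i=0$, whence $\textnormal{div}(\V_i)=0$.

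Should one prefer to avoid forms, the same conclusion follows by a symmetry argument: writing $\coff(a_{ij})$ via the Levi-Civita symbol as a signed sum of products of first derivatives $\partial_m f_l$ (with $l\neq i$) and differentiating, each term of $\sum_j\partial_j\coff(a_{ij})$ acquires a second derivative of some $f_l$ that is symmetric in its two differentiation indices while being multiplied by a Levi-Civita factor that is antisymmetric in the same pair of indices; hence the terms cancel in pairs. In either case, substituting $\textnormal{div}(\V_i)=0$ and $\V_i(f_i)=\det DF$ into the displayed Leibniz identity yields $\textnormal{div}(f_i\V_i)=\det DF$, as claimed.
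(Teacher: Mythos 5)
Your proof is correct, and its key step takes a genuinely different route from the paper's. Both arguments rest on the same Leibniz-rule decomposition $\textnormal{div}(f_i\V_i)=\V_i(f_i)+f_i\,\textnormal{div}(\V_i)$, and both identify $\V_i(f_i)=\sum_j(\partial_jf_i)\coff(a_{ij})$ with the Laplace expansion of $\det DF$ along the $i$-th row. The difference is in proving $\textnormal{div}(\V_i)=0$. The paper first reduces to $i=n$ (noting that transposing $f_i$ with $f_n$ flips the sign of both $\V_i$ and $\det DF$), then expands $\sum_k\partial_k\coff(a_{nk})$ into determinants $S_{kj}$ containing a mixed second derivative and verifies the antisymmetry $S_{kj}=-S_{jk}$, so the terms cancel in pairs; this is precisely the elementary symmetry argument you sketch as your fallback option. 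Your primary route instead identifies $\iota_{\V_i}\,dV$ up to sign with $df_1\wedge\cdots\wedge\widehat{df_i}\wedge\cdots\wedge df_n$ and invokes the fact that a wedge of exact $1$-forms is closed, so the Piola identity becomes an instance of $d\circ d=0$. This is shorter and conceptually cleaner, and it dispenses with both the reduction to $i=n$ and the sign bookkeeping of the $S_{kj}$; the price is that it presupposes the identities $d(\iota_{\mathcal{X}}\,dV)=(\textnormal{div}\,\mathcal{X})\,dV$ and the coefficient comparison between $\iota_{\V_i}\,dV$ and the wedge of the $df_j$, whereas the paper's computation is self-contained linear algebra. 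Both arguments ultimately use only the symmetry of second derivatives, which is available since $F$ is $C^{\infty}$.
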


\begin{proof}
If we denote $\V_{F,i}=\V_i$ to emphasize the dependence of the map $F$, it is easy to see that $\V_{F,i}=-\V_{\overline{F},n}$, where $\overline{F}$ is the map $F$ with $F_i$ permuted with $F_n$. 
So, since  $\det(DF)=-\det(D\overline{F})$, it is enough to prove that $\textnormal{div}(F_n\V_n)=\det DF$. 
By \eqref{lula} we have 
$$
\textnormal{div}(F_n\V_{n})=\sum_{k=1}^n(\partial_k F_n) \coff(a_{nk}) + F_n\sum_{k=1}^n\partial_k \coff(a_{nk}).
$$
It is clear that the first term above is $\det DF$. 
We assert that the second term is zero. 
Indeed, we have
$$
\sum_{k=1}^n\partial_k \coff(a_{nk})=\sum_{k=1}^n\sum_{\tatop{j=1}{j\neq k}}^nS_{kj},
$$
where
$$
S_{kj}=(-1)^{n+k}\det\left(\partial_1 g, \ldots , \partial_{j-1} g, \partial_k\partial_j g, \partial_{j+1} g, \ldots, \widehat{\partial_k g}  , \ldots ,  \partial_ng\right)
$$
and $g = \left(F_1, \ldots, F_{n-1}\right)^t$, see \eqref{lula}. 
So it is enough to prove that $S_{kj}=-S_{jk}$. 

Suppose first that $k<j$. 
We have 
\begin{align*} 
S_{kj} & = (-1)^{n+k}\det\left(\partial_1g , \ldots , \partial_{k-1}g , \partial_{k+1}g , \ldots , \partial_{j-1}g , \partial_k\partial_jg , \partial_{j+1}g , \ldots , \partial_ng\right)\\
& = (-1)^{n+k}(-1)^{j-1-k}\det\left(\ldots , \partial_{k-1}g , \partial_k\partial_jg , \partial_{k+1}g , \ldots , \partial_{j-1}g , \partial_{j+1}g , \ldots\right)\\
& = - S_{jk}.
\end{align*}
The case $k>j$ is similar.
\end{proof}

\begin{proof}[Proof of Theorem \textnormal{C}]  
Suppose on the contrary that there are $F_2, \ldots, F_n$ with the required properties. 
We \emph{claim that there exists $M\in\R$ such that}
$$
\int_{P_k}h\leq M,\ \textrm{ for all } k\in \N.
$$
This will be a contradiction with the hypothesis by the monotone convergence theorem. 
Therefore it remains to prove the claim. 
By the divergence theorem 
$$
\int_{P_k}\textnormal{div}(F_n\V_{n})=\int_{Q_k}\left< F_n\V_{n},N \right> w_{Q_k}+\int_{L_k}\left<F_n\V_{n},N\right>w_{L_k},
$$
where $w_{Q_k}$ and $w_{L_k}$ are the volume forms of $Q_k$ and $L_k$, respectively, and $N: Q_k\cup L_k\to\R^3$ is the normal vector field of $Q_k\cup L_k$. 
Since $\V_{n}(F_1) = 0$ and $Q_k$, for $k \in \N$, are parts of level sets of $F_1$, it follows that $\V_{n}$ is tangent to $Q_k$ for each $k\in\N$. 
This means that the sum above has just the second term, which is clearly bounded by 
$$
M=\int_{L}\left| \left< F_n\V_{n}, N\right> \right| w_{L},
$$
where $L$ is the bounded hypersurface given by Definition \ref{marina}, and $w_L$ is its volume form. 
From Lemma \ref{ghjkk} it follows that $\int_{P_k} \det DF \leq M$. 
Then the claim is proved, finishing the proof of the theorem. 
\end{proof}

\section*{Acknowledgements} 
We thank the referee for important comments that improved the presentation of the paper. 
The first author was partially supported by FAPESP grants 10/11323-7, 2014/ 26149-3 and 2017/00136-0. 
The second and third author were partially supported by FAPESP grants 07/08231-0 and 07/06896-5, respectively.

\end{document}